\newcommand{\vol}{{\rm vol}}
\newtheorem*{defi}{Definition}
\newtheorem*{thm}{Theorem}
\newtheorem*{prop}{Proposition}
\begin{document}
\title{Area minimizing unit vector fields on antipodally punctured unit 2-sphere}

\author{Fabiano G. B. Brito$^1$}
\author{Jackeline Conrado$^2$}
\author{Icaro Gon\c calves$^3$}
\author{Adriana V. Nicoli$^4$}

\thanks{The second and fourth authors was financed by Coordena\c c\~ao de Aperfei\c coamento de Pessoal de N\'ivel Superior- Brasil (CAPES) - Finance Code 001.}

\address{Centro de Matem\'{a}tica, Computa\c c\~{a}o e Cogni\c c\~{a}o, 
Universidade Federal do ABC, Santo Andr\'{e}, 09210-170, Brazil.}
\email{fabiano.brito@ufabc.edu.br, icaro.goncalves@ufabc.edu.br}

\address{Dpto. de Matem\'{a}tica, Instituto de Matem\'{a}tica e Estat\'{i}stica, 
Universidade de S\~{a}o Paulo, R. do Mat\~{a}o 1010, S\~{a}o Paulo-SP,
05508-900, Brazil.}
\email{avnicoli@ime.usp.br, jconrado@usp.br}

\subjclass[2019]{}

\begin{abstract} 
We provide a lower value for the volume of a unit vector field tangent to an antipodally Euclidean sphere $\mathbb{S}^2$ depending on the length of an ellipse determined by the indexes of its singularities. 
\end{abstract}

\maketitle

\section*{\it In memory of Amine Fawaz}
 
\section{Introduction and main results}

Inspired by \cite{BorGilM} and \cite{Faw}, we establish sharp lower bounds for the total area of unit vector fields on antipodally punctured Euclidean sphere $\mathbb{S}^2$, and these values depend on the indexes of their singularities.

\begin{thm}\label{Main1} Let $\vec{v}$ be a unit vector field defined on $M = \mathbb{S}^{2} \backslash \left\{N,S\right\}$. If $k = \max\left\{ I_{\vec{v}}(N), I_{\vec{v}}(S)\right\}$, then
	\[\vol(\vec{v}) \geq \pi L(\varepsilon_k), \]
	where $L(\varepsilon_k)$ is the length of the ellipse $\frac{x^2}{k^2}+\frac{y^2}{(k - 2)^2} = 1$ with $k > 2$ and $I_{\vec{v}}(P)$ stands for the Poincar\'e index of $\vec{v}$ around $P$.
\end{thm}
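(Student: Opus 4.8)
The plan is to write $\vol(\vec v)$ as an explicit double integral in geodesic polar coordinates centered at $N$, discard one manifestly nonnegative term, integrate out the azimuthal angle by the vector‑valued triangle inequality, recognize what survives as $\pi$ times the arc length of an ellipse, and finally identify that ellipse with $\varepsilon_k$ by relating a winding number to the Poincar\'e indices via Poincar\'e--Hopf. Concretely, let $(\phi,\theta)\in(0,\pi)\times\mathbb{R}/2\pi\mathbb{Z}$ be geodesic polar coordinates around $N$, so that the round metric reads $d\phi^2+\sin^2\phi\,d\theta^2$, $dM=\sin\phi\,d\phi\,d\theta$, and the orthonormal frame $E_1=\partial_\phi$, $E_2=\tfrac{1}{\sin\phi}\partial_\theta$ has connection form $\sigma=\cos\phi\,d\theta$. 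Writing $\vec v=\cos\psi\,E_1+\sin\psi\,E_2$ for a locally defined angle $\psi$, one computes $\nabla_X\vec v=(d\psi+\sigma)(X)\big(-\sin\psi\,E_1+\cos\psi\,E_2\big)$, whence $\|\nabla\vec v\|^2=\|d\psi+\sigma\|^2=\psi_\phi^2+\frac{(\psi_\theta+\cos\phi)^2}{\sin^2\phi}$. Plugging this into the formula $\vol(\vec v)=\int_M\sqrt{1+\|\nabla\vec v\|^2}\,dM$ for the volume of a unit vector field on a surface and absorbing the area element under the root gives
\[
\vol(\vec v)=\int_0^{2\pi}\!\!\int_0^{\pi}\sqrt{\sin^2\phi\,(1+\psi_\phi^2)+(\psi_\theta+\cos\phi)^2}\;d\phi\,d\theta\ \ge\ \int_0^{2\pi}\!\!\int_0^{\pi}\sqrt{\sin^2\phi+(\psi_\theta+\cos\phi)^2}\;d\phi\,d\theta ,
\]
where we simply dropped $\psi_\phi^2\ge 0$ (if $\vol(\vec v)=\infty$ there is nothing to prove, so we may assume the integrand summable).

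Next I would interchange the order of integration and, for each fixed $\phi$, apply $\int_0^{2\pi}\|F\|\,d\theta\ge\big\|\int_0^{2\pi}F\,d\theta\big\|$ to the vector $F(\theta)=\big(\sin\phi,\ \psi_\theta+\cos\phi\big)$. Its first component does not depend on $\theta$, and $\int_0^{2\pi}\psi_\theta\,d\theta=2\pi a$, where $a=\tfrac{1}{2\pi}\oint_{\{\phi=\text{const}\}}d\psi\in\mathbb{Z}$ is the winding of $\vec v$ along a parallel relative to $\{E_1,E_2\}$; this integer is independent of $\phi$ since $M$ is connected. Hence $\vol(\vec v)\ge 2\pi\int_0^{\pi}\sqrt{\sin^2\phi+(a+\cos\phi)^2}\,d\phi$. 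Using the identity
\[
\sin^2\phi+(a+\cos\phi)^2=(1+a)^2\cos^2(\phi/2)+(1-a)^2\sin^2(\phi/2)
\]
and the substitution $\phi=2s$, the remaining integral is exactly one half of the arc length of the ellipse with semi‑axes $|1+a|$ and $|1-a|$, so
\[
\vol(\vec v)\ \ge\ \pi\,L(\mathcal E_a),\qquad \mathcal E_a:\ \frac{x^2}{(1+a)^2}+\frac{y^2}{(1-a)^2}=1 .
\]

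It then remains to match $\mathcal E_a$ with $\varepsilon_k$. Comparing $\{E_1,E_2\}$ with a parallel frame on a small disk about $N$, the radial field $E_1=\partial_\phi$ winds exactly once, so $I_{\vec v}(N)=1+a$; since the only singular points are $N$ and $S$, Poincar\'e--Hopf on $\mathbb{S}^2$ forces $I_{\vec v}(S)=2-(1+a)=1-a$. Consequently $k=\max\{1+a,1-a\}=1+|a|$, so $|a|=k-1$, and the (unordered) pair of semi‑axes $\{|1+a|,|1-a|\}$ of $\mathcal E_a$ equals $\{|a|+1,|a|-1\}=\{k,k-2\}$; the hypothesis $k>2$ makes $|a|>1$, so $\mathcal E_a$ is nondegenerate and $L(\mathcal E_a)=L(\varepsilon_k)$. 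This gives $\vol(\vec v)\ge\pi L(\varepsilon_k)$.

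The step I expect to require the most care is the middle one: rigorously justifying the Fubini interchange and the vector triangle inequality under only the summability hypothesis, and — above all — proving that $a$ is a genuinely well‑defined $\phi$‑independent integer equal to the parallel winding of $\vec v$, together with the sign bookkeeping that yields $I_{\vec v}(N)=1+a$. By contrast, discarding $\psi_\phi^2$ costs nothing for a lower bound; moreover this same estimate identifies the candidate extremizers $\vec v=\cos(a\theta)\,\partial_\phi+\sin(a\theta)\,\tfrac{1}{\sin\phi}\,\partial_\theta$, for which $\psi_\phi\equiv 0$ and $F(\theta)$ is constant, so both inequalities above become equalities and $\vol(\vec v)=\pi L(\varepsilon_k)$ — which should account for the sharpness announced in the abstract.
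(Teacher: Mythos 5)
Your argument is correct, and its skeleton coincides with the paper's: compute the area integrand in a parallels/meridians frame, discard the meridian derivative of the angle, bound the remaining integrand linearly on each parallel, evaluate the resulting circulation topologically, and recognize an elliptic arc length. Where you differ is in how the last two steps are executed. The paper bounds $\sqrt{1+(\tan\alpha+\theta_1)^2}$ by $\cos\varphi+\sin\varphi\,(\tan\alpha+\theta_1)$ with an explicitly guessed, latitude-dependent calibration angle $\varphi_k(\alpha)=\arctan\bigl(\tan\alpha+\tfrac{k-1}{\cos\alpha}\bigr)$ (which presupposes knowing the extremal field), and then computes $\int_{\mathbb{S}^1_\alpha} i^*\omega_{12}$ by Stokes' theorem on spherical caps; you instead let the vector-valued Minkowski inequality $\int\|F\|\ge\|\int F\|$ choose the optimal direction automatically, and you replace the Stokes/cap computation by the constancy of the winding number $a$ of $\vec v$ relative to the polar frame together with $I_{\vec v}(N)=1+a$ and the index-sum relation $I_N+I_S=2$ (valid for fields with isolated point singularities). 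These are equivalent inputs — your $2\pi a$ plus the connection-form term $2\pi\cos\phi$ is exactly the paper's $2\pi(k-1+\sin\alpha)$ — but your version buys a cleaner optimization step, a symmetric treatment of whether the maximal index sits at $N$ or $S$ (via $|a|=k-1$, so no ``suppose $I_N=k$'' normalization), and, as you note, it also exhibits the equality case $\psi_\phi\equiv0$, $\psi=a\theta$, matching the minimizers $\vec v_k$ of Section 3. One small caveat: the identification $I_{\vec v}(N)=1+a$ depends on orientation conventions (with the opposite convention one gets $1-a$), but since your final ellipse has semi-axes $\{|1+a|,|1-a|\}$ and $k=1+|a|$, the conclusion is insensitive to this sign, so nothing is lost.
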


This is a natural extension of the theorem proved in \cite{BCJ} by P. Chacon, D. Johnson and the first author. In \cite{BCJ}, a general lower-bound for area of unit vector fields in $\mathbb{S}^2 \backslash \{N,S\}$ is established. It turns out to be the area of north-south vector field with both indexes equal to $1$. In this context, our theorem provides certain lower bounds for each class of index.

We also exhibit minimizing vector fields $\vec{v}_k$ within each index class. These fields have areas given essentially by the length of ellipses depending just on the indexes in $N$ and $S$.

%\section{Preliminaries}
\section{Preliminaries and the Proof of Theorem}

Let $M = \mathbb{S}^{2} \backslash \left\{N,S\right\}$ be the Euclidean sphere in which two antipodal points $N$ and $S$ are removed. Denote by $g$ the usual metric of $\mathbb{S}^2$ induced from $\mathbb{R}^3$, and by $\nabla$ the Levi-Civita connection associated to $g$. Consider the oriented orthonormal local frame $\left\{ e_1, e_2\right\}$ on $M$, where $e_2$ is tangent to the meridians and $e_1$ to the parallels. Let $\vec{v}$ be a unit vector field tangent to $M$ and consider another oriented orthonormal local frame $\left\{\vec{v}^{\perp},\vec{v}\right\}$ on $M$ and its dual basis  $\left\{ \omega_1, \omega_2 \right\}$ compatible with the orientation of $\left\{ e_1, e_2\right\}$.

In dimension $2$, the volume of $\vec{v}$ is given by
\begin{eqnarray}\label{volreduces}
\vol(\vec{v}) = \int_{\mathbb{S}^2}{\sqrt{1+ \gamma^2 + \delta^2}}\nu,
\end{eqnarray}
where $\gamma=g(\nabla_{\vec{v}}\vec{v}, \vec{v}^{\perp})$ and $\delta = g(\nabla_{v^{\perp}}\vec{v}^{\perp}, \vec{v})$ are the geodesic curvatures associated to $\vec{v}$ and $\vec{v}^{\perp}$, respectively.

Let $\mathbb{S}^1_{\alpha}$ be the parallel of $\mathbb{S}^2$ at latitude $\alpha \in (-\frac{\pi}{2}, \frac{\pi}{2})$ and $\mathbb{S}^1_{\beta}$ be the meridian of $\mathbb{S}^2$ at longitude $\beta \in (0, 2\pi)$. 

\begin{prop}\label{integrando_volume} 
Let $\theta \in [0,\pi/2]$ be the oriented angle from $e_2$ to $\vec{v}$. If $\vec{v}=(\cos \theta) e_1 + (\sin \theta) e_2$ and $\vec{v}^{\perp}=(-\sin \theta)e_1 + (\cos \theta) e_2$, then
\[1 + \gamma^2 + \delta^2 = 1 + (\tan \alpha + \theta_1)^2 + \theta_2^2,\]
where $\theta_1 = d\theta(e_1)$, $\theta_2 = d\theta(e_2)$.
%, $\gamma$ is the geodesic curvature of the integral curves that are tangent to $\vec{v}$, and $\delta$ is the geodesic curvature of the curves that are orthogonal to $\vec{v}$.
\end{prop}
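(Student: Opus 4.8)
The plan is to compute the covariant derivatives $\nabla_{\vec v}\vec v$ and $\nabla_{\vec v^{\perp}}\vec v^{\perp}$ directly in the fixed frame $\{e_1,e_2\}$, read off $\gamma$ and $\delta$, and then observe that the $\theta$-dependence cancels when one forms $\gamma^2+\delta^2$.

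First I would note that, because $|\vec v|=|\vec v^{\perp}|=1$, the field $\nabla_X\vec v$ is everywhere orthogonal to $\vec v$, hence a multiple of $\vec v^{\perp}$, and likewise $\nabla_X\vec v^{\perp}$ is a multiple of $\vec v$; moreover, differentiating $g(\vec v,\vec v^{\perp})=0$ shows these two coefficients are opposite, so it is enough to compute $\nabla_X\vec v$. Next I would pin down the connection of $\{e_1,e_2\}$: the meridians are geodesics, so $\nabla_{e_2}e_2=0$, while the parallel $\mathbb{S}^1_{\alpha}$ has geodesic curvature $\tan\alpha$ relative to the northward normal, so $\nabla_{e_1}e_1=(\tan\alpha)\,e_2$; metric compatibility and orthonormality then force the remaining relations, i.e. the connection form $\omega_{12}$ defined by $\nabla_X e_1=\omega_{12}(X)e_2$, $\nabla_X e_2=-\omega_{12}(X)e_1$ satisfies $\omega_{12}(e_1)=\tan\alpha$ and $\omega_{12}(e_2)=0$. (Equivalently one reads this off Cartan's structure equations for the coframe dual to $\{e_1,e_2\}$ in the round metric $d\alpha^2+\cos^2\alpha\,d\beta^2$.)

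Differentiating $\vec v=(\cos\theta)e_1+(\sin\theta)e_2$ by the Leibniz rule, using $d(\cos\theta)=-\sin\theta\,d\theta$, $d(\sin\theta)=\cos\theta\,d\theta$ and the relations above, and regrouping, gives
\[
\nabla_X\vec v=\big(d\theta(X)+\omega_{12}(X)\big)\,\vec v^{\perp},
\]
which is the classical fact that the connection forms of the two frames differ by $d\theta$. Hence $\gamma=d\theta(\vec v)+\omega_{12}(\vec v)$ and $\delta=-\big(d\theta(\vec v^{\perp})+\omega_{12}(\vec v^{\perp})\big)$. Expanding $\vec v$ and $\vec v^{\perp}$ in $\{e_1,e_2\}$ and substituting $\theta_1=d\theta(e_1)$, $\theta_2=d\theta(e_2)$, $\omega_{12}(e_1)=\tan\alpha$, $\omega_{12}(e_2)=0$ yields
\[
\gamma=\cos\theta\,(\tan\alpha+\theta_1)+\sin\theta\,\theta_2,\qquad \delta=\sin\theta\,(\tan\alpha+\theta_1)-\cos\theta\,\theta_2.
\]
Squaring and adding, the mixed terms cancel and $\cos^2\theta+\sin^2\theta=1$ collapses the rest, so $\gamma^2+\delta^2=(\tan\alpha+\theta_1)^2+\theta_2^2$, which is the asserted identity.

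I expect the only delicate step to be the bookkeeping in the second paragraph: getting the geodesic curvature of the parallel to come out as exactly $\tan\alpha$ with the sign consistent with the orientation conventions fixed before the statement. Once that is settled, the remaining computation is short and essentially forced, and the cancellation producing $\gamma^2+\delta^2$ is automatic from the fact that $\{\vec v^{\perp},\vec v\}$ is a rotation of $\{e_1,e_2\}$.
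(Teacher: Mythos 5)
Your argument is correct, and it reaches exactly the paper's intermediate formulas $\gamma=\cos\theta\,(\tan\alpha+\theta_1)+\sin\theta\,\theta_2$ and $\delta=\sin\theta\,(\tan\alpha+\theta_1)-\cos\theta\,\theta_2$ before the same Pythagorean cancellation; the difference is purely in how you get there. The paper expands $\gamma$ and $\delta$ by brute force into eight terms ($A,B,C,D$ and $A',B',C',D'$), computing each with the Leibniz rule from the two facts $g(\nabla_{e_1}e_1,e_2)=\tan\alpha$ and $\nabla_{e_2}e_2=0$, and then regroups. You instead package those same two facts as the connection form of $\{e_1,e_2\}$ ($\omega_{12}(e_1)=\tan\alpha$, $\omega_{12}(e_2)=0$), invoke skew-symmetry to reduce everything to $\nabla_X\vec v$, and use the classical frame-rotation identity $\nabla_X\vec v=\bigl(d\theta(X)+\omega_{12}(X)\bigr)\vec v^{\perp}$, so that $\gamma$ and $\delta$ drop out by evaluating one $1$-form on $\vec v$ and $\vec v^{\perp}$. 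This is a cleaner and less error-prone derivation, and it makes transparent why the $\theta$-dependence must cancel in $\gamma^2+\delta^2$ (it is just the squared norm of the rotated-frame connection form $d\theta+\omega_{12}$); the paper's version is more elementary in that it never names the connection form at this stage, though the same form $\omega_{12}=\delta\omega_1+\gamma\omega_2$ reappears in its proof of the main theorem. Your flagged sign issue is settled by the paper's own convention $\tan\alpha=g(\nabla_{e_1}e_1,e_2)$ with $e_2$ the northward meridian direction, which agrees with your computation of the geodesic curvature of the parallel.
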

\begin{proof} 
We have
\begin{equation}\begin{array}{rcl} 
\label{def_gamma}
\gamma & = & g\left( \nabla_{\vec{v}}\vec{v}, \vec{v}^{\perp}\right) \\
	   & = & g\left( \nabla_{(\cos \theta) e_1 + (\sin \theta) e_2} \left[(\cos \theta) e_1 + (\sin \theta) e_2 \right], \vec{v}^{\perp}  \right) \\				
	   & = & g\left( \nabla_{(\cos \theta) e_1} (\cos \theta) e_1, \vec{v}^{\perp}   \right) 
				+ g\left( \nabla_{(\sin \theta) e_2} (\cos \theta) e_1, \vec{v}^{\perp} \right) \\ 
	   & + & g\left( \nabla_{(\cos \theta) e_1} (\sin \theta)e_2, \vec{v}^{\perp} \right)
				+ g\left( \nabla_{(\sin \theta) e_2} (\sin \theta) e_2, \vec{v}^{\perp} \right)

\end{array}
\end{equation}
and
\begin{equation}\begin{array}{rcl} 
\label{def_delta}
\delta & = & g\left( \nabla_{\vec{v}^{\perp}}\vec{v}^{\perp}, \vec{v}\right) \\
	   & = & g\left( \nabla_{(-\sin \theta) e_1 + (\cos \theta) e_2} \left[-(\sin \theta) e_1 + (\cos \theta) e_2 \right], \vec{v}  \right) \\ 
	   & = & g\left( \nabla_{(-\sin \theta) e_1} (-\sin \theta) e_1, \vec{v}   \right) 
				+ g\left( \nabla_{(\cos \theta) e_2} (-\sin \theta) e_1, \vec{v} \right) \\
	   & + & g\left( \nabla_{(-\sin \theta) e_1} (\cos \theta)e_2, \vec{v} \right)
				+ g\left( \nabla_{(\cos \theta) e_2} (\cos \theta) e_2, \vec{v} \right).
\end{array}				
\end{equation}
We write $\gamma$ and $\delta$ as the following sums
\[\gamma = A + B + C + D \enspace\text{and}\enspace \delta = A^{\prime} + B^{\prime} + C^{\prime} + D^{\prime},\]
with
\[ 
\begin{array}{rclrcl}
 A & = & g\left( \nabla_{(\cos \theta) e_1} (\cos \theta) e_1, \vec{v}^{\perp}  \right),
  & B & = & g\left( \nabla_{(\sin \theta) e_2} (\cos \theta) e_1, \vec{v}^{\perp} \right), \\
 C & = & g\left( \nabla_{(\cos \theta) e_1} (\sin \theta)e_2, \vec{v}^{\perp} \right), 
  & D & = & g\left( \nabla_{(\sin \theta) e_2} (\sin \theta) e_2, \vec{v}^{\perp} \right) 
\end{array}
\]
and
\[ 
\begin{array}{rclrcl}
A^{\prime} & = & g\left( \nabla_{(-\sin \theta) e_1} (-\sin \theta) e_1, \vec{v}   \right), & B^{\prime} 
& = & g\left( \nabla_{(\cos \theta) e_2} (-\sin \theta) e_1, \vec{v} \right) \\ 
 C^{\prime} & = & g\left( \nabla_{(-\sin \theta) e_1} (\cos \theta)e_2, \vec{v} \right), 
& D^{\prime} & = & g\left( \nabla_{(\cos \theta) e_2} (\cos \theta) e_2, \vec{v} \right). 
\end{array}
 \]

First observe that $\tan \alpha = g\big( \nabla_{e_1}e_1, e_2 \big)$ and $\nabla_{e_2}e_2 = 0$. By an elementary computation 
we obtain 
\[
\begin{array}{rclrcl}
A & = & \sin^2\theta (\cos \theta) \theta_1 + \cos^3\theta \tan \alpha, & B & = & (\sin^3 \theta) \theta_2, \\		
C & = & (\cos^3\theta)\theta_1 + \sin^2 \theta \cos \theta \tan \alpha, & D & = & (\sin \theta \cos^2\theta) \theta_2 
\end{array}
\]
and
\[
\begin{array}{rclrcl}
A^{\prime} & = & (\sin \theta \cos^2 \theta) \theta_1 + \sin^3\theta \tan \alpha, & B^{\prime} & = & (-\cos^3 \theta) \theta_2, \\
C^{\prime} & = & (\sin^3\theta)\theta_1 + \sin \theta \cos^2 \theta \tan \alpha,  & D^{\prime} & = & -(\sin^2 \theta \cos \theta) \theta_2.
\end{array}
\] 
Moreover,
\[\begin{array}{rcl}
\gamma & = & (\cos^3\theta \tan \alpha + \sin^2 \theta \cos \theta \tan \alpha) + (\sin^2\theta (\cos \theta) \theta_1 + (\cos^3\theta)\theta_1 ) \\
	   & + & ((\sin^3 \theta) \theta_2 + (\sin \theta \cos^2\theta) \theta_2) \\
       & = & \cos \theta \tan \alpha + (\cos \theta) \theta_1 + (\sin \theta) \theta_2 \\
       & = & \cos \theta (\tan \alpha + \theta_1) + (\sin \theta) \theta_2 
\end{array}
\]
and
\[
\begin{array}{rcl}
	\delta & = &  (\sin^3\theta \tan \alpha +  \sin \theta \cos^2 \theta \tan \alpha) + ((\sin \theta \cos^2 \theta) \theta_1 + (\sin^3\theta)\theta_1) \\
		   & + & ((-\cos^3 \theta) \theta_2-(\sin^2 \theta \cos \theta) \theta_2) \\
		   & = & \sin \theta \tan \alpha + (\sin \theta) \theta_1 - (\cos \theta) \theta_2 = \sin \theta (\tan \alpha + \theta_1) - (\cos \theta) \theta_2. 
\end{array}
\]
Finally,
\begin{equation}\label{curva_geode_tangente}
\gamma = \cos \theta (\tan \alpha + \theta_1) + (\sin \theta) \theta_2, 
\end{equation}
\begin{equation}\label{curva_geode_ortogonal}
\delta = \sin \theta (\tan \alpha + \theta_1) - (\cos \theta) \theta_2.
\end{equation}
From equations (\ref{curva_geode_tangente}) and (\ref{curva_geode_ortogonal}), we find
\[
\begin{array}{rcl}
 1 + \gamma^2 + \delta^2 & = & 1 + \left(\cos \theta (\tan \alpha + \theta_1) + (\sin \theta) \theta_2 \right)^2 +  \left( \sin \theta (\tan \alpha + \theta_1) - (\cos \theta) \theta_2 \right)^2 \\
						 & = & 1 + \cos^2 \theta (\tan \alpha + \theta_1)^2 + (\sin \theta)^2 \theta_2^2 + \sin^2 \theta (\tan \alpha + \theta_1)^2 + (\cos^2 \theta) \theta_2^2 \\
						 & = & 1 + (\tan \alpha + \theta_1)^2 + \theta_2^2.
\end{array}
\]
Therefore,
\[
1 + \gamma^2 + \delta^2 = 1 + (\tan \alpha + \theta_1)^2 + \theta_2^2.
\]
\end{proof}

%%%%%%%%%%%%%%%%%%%%%%%%%%%%%%%%%%%%%%%%%%%%%%%%%%%%%%%%%%%%%%%

Our proposition allows us to rewrite the volume functional as an integral depending on the latitude $\alpha$ and the derivatives of $\theta$
\begin{equation}\label{vol_usando_novo_integrando}
\vol(\vec{v})= \int_{M}{\sqrt{1+ \left( \tan \alpha + \theta_1\right)^2 + \theta_2^2}}.
\end{equation}

\begin{proof}[Proof of Theorem]
Given $\varphi$ such that $0 \leq \varphi \leq 2\pi$,
{\fontsize{10}{10}\selectfont %diminuir o tamanho da fonte
\[1+ \left( \tan \alpha + \theta_1\right)^2 + \theta_2^2 =
\left( \cos\varphi + \sin\varphi \left[ \sqrt{\left(\tan \alpha + \theta_1\right)^2 + \theta_2^2}\right] \right)^2
+ 
\left( -\sin\varphi + \cos \varphi \left[ \sqrt{\left(\tan \alpha + \theta_1\right)^2 + \theta_2^2}\right] \right)^2.
\]}
Hence,
{\fontsize{11}{11}\selectfont %diminuir o tamanho da fonte
	\[
\vol(\vec{v}) =  \int_{M}{\sqrt{\left( \cos\varphi + \sin\varphi \left[ \sqrt{\left(\tan \alpha + \theta_1\right)^2 + \theta_2^2}\right] \right)^2
+ 
\left( -\sin\varphi + \cos \varphi \left[ \sqrt{\left(\tan \alpha + \theta_1\right)^2 + \theta_2^2}\right] \right)^2}}.
\]}
Remember that
\[ 
1+ \left( \tan \alpha + \theta_1\right)^2 + \theta_2^2 \geq 
1+ \left( \tan \alpha + \theta_1\right)^2,
\]
implies
\[ 
\sqrt{1+ \left( \tan \alpha + \theta_1\right)^2 + \theta_2^2} \geq 
\sqrt{1+ \left( \tan \alpha + \theta_1\right)^2}.
\]
From the general inequality, $\sqrt{a^2 + b^2} \geq |a\cos \varphi + b \sin \varphi|$, for any $a$, $b$, $\varphi \in \mathbb{R}$, we have
\[
\sqrt{1+ \left( \tan \alpha + \theta_1\right)^2} \geq \left|\cos \varphi + \sin \varphi(\tan(\alpha)+\theta_1) \right|. 
\]
Therefore,
\begin{equation}\label{vol_theta2_zero}
\begin{array}{rcl}
\vol(\vec{v}) & \geq & \int_{M}{\sqrt{\big( \cos\varphi + \sin\varphi |\tan \alpha + \theta_1|\big)^2 + \big(-\sin\varphi + \cos \varphi |\tan \alpha + \theta_1| \big)^2}} \\
			  & \geq & \int_{M}{\cos \varphi + \sin \varphi \left| \tan \alpha + \theta_1 \right|}.
\end{array}
\end{equation}
This inequality is valid for all $\varphi$ such that $0 \leq \varphi \leq 2\pi$. 

As a next step one consider the following conditions:
\begin{enumerate}[i)]
\item 
\[\varphi_k(\alpha) = \arctan \left( \tan\alpha + \frac{k-1}{\cos \alpha}\right);\]
\item
\[\tan\big(\varphi_k(\alpha)\big) = \tan\alpha + \frac{k-1}{\cos \alpha}.\]
\end{enumerate}
Replacing these conditions in equation (\ref{vol_theta2_zero}) we find
\begin{equation}\label{volume_com_alfa}
\vol(\vec{v}) \geq \int_{M}{ \left(\cos \big( \varphi_k(\alpha)\big) + \sin \big(\varphi_k(\alpha) \big)\right) \left| \tan \alpha + \theta_1 \right| \nu}.
\end{equation}
Condition (i) provides that 
\[
\cos \big(\varphi_k(\alpha)\big) = 
\frac{\cos\alpha}{\sqrt{1 + (k-1)^2 + 2(k-1)\sin\alpha}}, \hspace{0.2cm} - \frac{\pi}{2} \leq \alpha \leq \frac{\pi}{2}, 
\]
\[
\sin \big(\varphi_k(\alpha)\big) = \frac{ k-1 + \sin\alpha}{\sqrt{1 + (k-1)^2 + 2(k-1)\sin\alpha}}, \hspace{0.2cm} - \frac{\pi}{2} \leq \alpha \leq \frac{\pi}{2}. 
\]
Thus, as the second part of the inequation (\ref{volume_com_alfa}) is equal to
{\fontsize{10}{10}\selectfont %diminuir o tamanho da fonte 
\begin{equation}\label{***}
\lim_{\alpha_0 \to - \frac{\pi}{2}} \left[\int_{\alpha_0}^{\frac{\pi}{2}}\int_0^{2\pi}\left( \frac{\cos\alpha}{\sqrt{1 + (k-1)^2 + 2(k-1)\sin\alpha}} + \frac{ k-1 + \sin\alpha}{\sqrt{1 + (k-1)^2 + 2(k-1)\sin\alpha}} \left| \tan \alpha + \theta_1 \right|\right) \right]d\beta d\alpha.
\end{equation}}

Remember that Cartan's connection form $\omega_ {12}$ is given by 
\[
\omega_{12} = \delta \omega_1 + \gamma \omega_2,
\]
where $\{ \omega_1, \omega_2 \}$ is dual basis of $\{\vec{v}^\perp, \vec{v} \}$. Let $i^* : \mathbb{S}^1_{\alpha} \hookrightarrow \mathbb{S}^2$ be the inclusion map, and $e_1= \sin \theta \vec{v}^{\perp} + \cos\theta \vec{v}$, then
\[i^*(\omega_{12})(e_1) = \delta \sin\theta + \gamma\cos \theta.\]
From equations (\ref{curva_geode_tangente}) and (\ref{curva_geode_ortogonal}),
we have
\begin{eqnarray}\nonumber
i^*(\omega_{12})(e_1) &=& \sin\theta \left[\sin \theta \big(\tan\alpha+\theta_1\big)-\cos\theta(\theta_2)\right] + \cos \theta \left[\cos\theta \big(\tan\alpha+\theta_1\big) +\sin\theta (\theta_2)\right]\\ \nonumber
& = &  \tan\alpha + \theta_1.
\end{eqnarray}

Thus, from (\ref{***}) 
\begin{equation}\label{integralcomlimitemenor}
\vol(\vec{v})
\geq \lim_{\alpha_0 \to -\frac{\pi}{2}} \left( \int_{\alpha_0}^{\frac{\pi}{2}} \left( \int_{0}^{2\pi}\left(\frac{\cos \alpha + \big((k-1) + \sin \alpha\big)i^*(\omega_{12})(e_1)}{\sqrt{1+(k-1)^2 + 2(k-1)\sin \alpha}}\right) d\beta\right) d\alpha\right).
\end{equation}
In order to compute the integral of $i^*\omega_{12}$ over the parallel of $\mathbb{S}^2$ at constant latitude $\alpha$, we follow the same arguments in the proof Theorem 1.1 of \cite{BCJ}. 
Denote by $\omega$ the connection form $\omega_{12}$ and 
\[
\mathbb{S}^2_{\alpha} = \{(x,y,z \in \mathbb{R}^3 ; z \geq \sin \alpha, \alpha_0 \leq \alpha \leq \frac{\pi}{2})\}.
\]
The  $2$-form $d\omega$ is given by
\[
d\omega = \omega_1 \wedge \omega_2.
\]
A simple application of Stokes' theorem implies that
\[
\int_{\mathbb{S}^2_{\alpha}}d\omega = 2\pi \big(I_{N}(\vec{v}) \big)- \int_{\mathbb{S}^1_{\alpha}} i^*\omega_{12}.
\]
Suppose that $I_{N}(\vec{v}) = \sup \{I_{N}(\vec{v}), I_{S}(\vec{v})\} = k$, we obtain
\begin{equation}\label{integral_pullback}
\int_{\mathbb{S}^1_{\alpha}}i^*(\omega_{12})(e_1)d\beta = 2\pi k - \mbox{Area}\big( \mathbb{S}^2_{\alpha}\big) = 2\pi k - 2\pi\big( 1 - \sin \alpha \big) = 2\pi\big( k - 1 + \sin \alpha \big).
\end{equation}
From inequation (\ref{integralcomlimitemenor}), 
\[
\begin{array}{rcl}
\vol(\vec{v}) & \geq & \lim_{\alpha_0 \to -\frac{\pi}{2}} \left( \int_{\alpha_0}^{\frac{\pi}{2}} \left( \int_{0}^{2\pi}\left(\frac{\cos \alpha + \big((k-1) + \sin \alpha\big)i^*(\omega_{12})(e_1)}{\sqrt{1+(k-1)^2 + 2(k-1)\sin \alpha}}\right) d\beta\right) d\alpha\right) \\
			  & = & \lim_{\alpha_0 \to -\frac{\pi}{2}}  \int_{\alpha_0}^{\frac{\pi}{2}} \left( \frac{\cos \alpha}{\sqrt{1+(k-1)^2 + 2(k-1)\sin \alpha}}\int_{0}^{2\pi}d\beta + \frac{\big((k-1) + \sin \alpha\big)}{\sqrt{1+(k-1)^2 + 2(k-1)\sin \alpha}}  \int_{\mathbb{S}^1_{\alpha}}i^*\omega_{12} \right) d\alpha \\
			  & = & \lim_{\alpha_0 \to -\frac{\pi}{2}}  \int_{\alpha_0}^{\frac{\pi}{2}} \left( \frac{2\pi\cos^2 \alpha}{\sqrt{1+(k-1)^2 + 2(k-1)\sin \alpha}} 
+ \frac{2\pi\big((k-1) + \sin \alpha\big)^2}{\sqrt{1+(k-1)^2 + 2(k-1)\sin \alpha}}\right) d\alpha,
\end{array}
\]
where the last inequality is obtained from (\ref{integral_pullback}). Therefore,
\[
\vol(\vec{v})
\geq 2 \pi \lim_{\alpha_0 \to -\frac{\pi}{2}}  \int_{\alpha_0}^{\frac{\pi}{2}} \left(\frac{\cos^2 \alpha + \big((k-1) + \sin \alpha\big)^2}{\sqrt{1+(k-1)^2 + 2(k-1)\sin \alpha}}\right)d\alpha.
\]
Analogously,
\[
\vol(\vec{v})
\geq 2 \pi \lim_{\alpha_0 \to -\frac{\pi}{2}}  \int_{\alpha_0}^{\frac{\pi}{2}}\left({\sqrt{1+(k-1)^2 + 2(k-1)\sin \alpha}}\right)d\alpha.
\]
A trigonometrical identity give us
\[
\vol(\vec{v})
\geq 2 \pi \int_{-\frac{\pi}{2}}^{\frac{\pi}{2}}{\sqrt{(k-2)^2 + 4(k-1)\sin^2 \left(\frac{\alpha}{2} + \frac{\pi}{4} \right)}}d\alpha.
\]

Assume that $t = \frac{\alpha}{2} + \frac{\pi}{4}$, then 
\begin{equation}\label{volumenavariavelt}
\vol(\vec{v})
\geq 4 \pi \int_{0}^{\frac{\pi}{2}}{\sqrt{(k-2)^2 + 4(k-1)\sin^2 t}}dt.
\end{equation}
Consider $k > 2$ and an ellipse $\varepsilon_{k}$ given by 
\[\frac{x^2}{k^2} + \frac{y^2}{(k-2)^2} = 1.\]
Let $\mu$ be a parametrization for $\varepsilon_{k}$ defined by
$\mu(t) = (k \cos t, (k-2)\sin t)$.
Its length is
\begin{equation}\label{compr_elipse}
L(\varepsilon_k) = 4\int_{0}^{2\pi}\left(\sqrt{ (k-2)^2 + 4(k-1)\sin^2 t} \right)dt.
\end{equation}
%\end{equation}
Therefore,
\[
\vol(\vec{v})
\geq \pi L(\varepsilon_k).
\]
\end{proof}

\section{Area-minimizing vector fields $\vec{v}_k$ on $M$}

Now we are going to exhibit a family of unit vector fields attaining the lower value from the main Theorem. At the end of this Section, we also show a geometric interpretation of the areas of these vector fields.

Our previous computations imply 
\[
\begin{array}{rcl}
\vol(\vec{v}) & = & \int[1+\theta_2^2+(\tan\alpha+\theta_1)^2]^{\frac{1}{2}} \\
& \geq & \int[1+(\tan\alpha+\theta_1)^2]^{\frac{1}{2}} \\
& \geq & \int|\cos \varphi+\sin\varphi(\tan\alpha+\theta_1)|. \\
\end{array}
\] 
Assuming $\vol(\vec{v})=\pi L(\varepsilon_k)$, we have  $\theta_2 =0$ and $\cos\varphi(\tan\alpha+\theta_1)=\sin\varphi$, where $\varphi \in \mathbb{R}$. Then $\theta_1=\tan\varphi-\tan\alpha$. Finally $\varphi=\varphi(\alpha)=\arctan\left(\tan\alpha+\frac{k-1}{\cos\alpha}\right)$, which implies $\theta_1=\frac{k-1}{\cos\alpha}$. 

We may summarize this discution in a definition including $p\mapsto e_2(p)$, which is a minimal vector field tangent to the meridians of $\mathbb{S}^2\backslash\{N,S\}$, see \cite{BorGilM} and \cite{BCJ}.

\begin{defi}
 Let $k$ be a positive integer, $k\neq2$ and define:
\begin{enumerate}
	\item  $\vec{v}_1(p) = \vec{e}_2(p)$, if $k=1$;
	\item  $\vec{v}_k(p) = \cos \theta(p)\vec{e}_1(p) + \sin \theta(p)\vec{e}_2(p)$, if $k>2$, where $
	\theta: \mathbb{S}^2 \backslash \left\{N, S \right\} \to \mathbb{R}$ satisfy 
	\[\theta_1 (p) = \frac{k-1}{\sqrt{x^2 + y^2}}.\]
\end{enumerate}
\end{defi}

Notice that $\theta$ has constant variation along the parallel $x^2 + y^2 = \cos\alpha$, with $\alpha\in \left(-\frac{\pi}{2},\frac{\pi}{2}\right)$ constant, and this includes the case where $k=1$.

If we use spherical coordinates $(\beta, \alpha)$ so that $p=(\cos\alpha\cos\beta, \cos\alpha\sin\beta, \sin\alpha)$, we can say that the vector $\vec{v}_k$ spins at a constant speed of rotation along the parallel $\alpha$. Moreover, $\vec{v}_k$ gives exactly $k-1$ turns when it passes the $\alpha$ parallel, with respect to the referential $\left\{\vec{e}_1, \vec{e} _2 \right\} $, and it gives $k$ turns with respect to a fixed polar referential, in this case, $\theta_1 (p) = \frac{k-1}{\cos \alpha}$.

In the case of a vector field  with two singularities at $N$ and $S$ with indexes $4$ and $-2$, respectively, Figures \ref{indicenorte} and \ref{indicesul} provide a visual representation for the case $k=4$
\[
\vol(\vec{v})
\geq4\pi\int_0^{2\pi}\sqrt{4+12sin^2t}\,dt
\]
\begin{multicols}{2}	
	\begin{figure}[H]
		\centering
		\includegraphics[height=6cm]{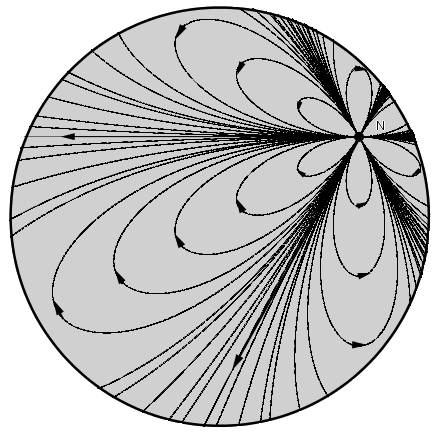}
		\caption{Singularity of index $4$ in north pole}
		\label{indicenorte}
	\end{figure}
	\begin{figure}[H]
		\centering
		\includegraphics[height=6cm]{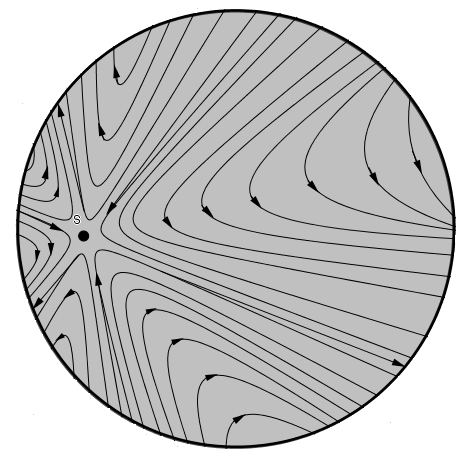}
		\caption{Singularity of index $-2$ in south pole}
		\label{indicesul}
	\end{figure}
\end{multicols}

Finally, for the minimal vector field $\vec{v}_k$, its image  $\vec{v}_k(\mathbb{S}^2\backslash\{N,S\})$ is a surface in $T^1\mathbb{S}^2\backslash\{N,S\}$. Using the well-known Pappus-Guldin's Theorem, its area  equals the area of a suitable ellipsoid of revolution, see the Figure \ref{pneuzinho}.
\begin{figure}[H]
	\centering
	\includegraphics[height=7cm]{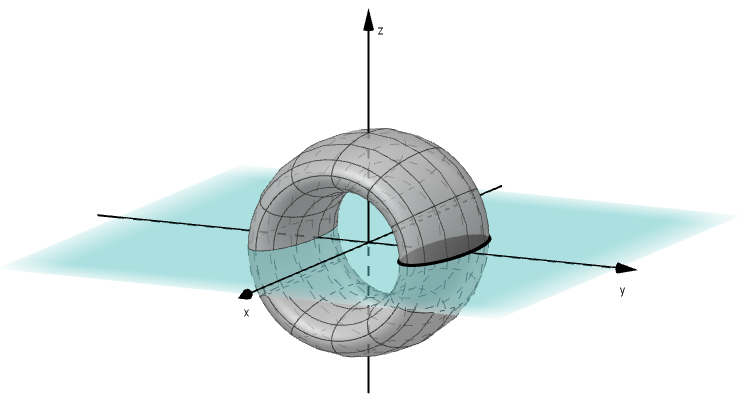}
	\caption{Elippsoid of revolution}
	\label{pneuzinho}
\end{figure}

\section*{Acknowledgment}

We would like to thank professor Giovanni Nunes for carefully reading and commenting a previous version of this manuscript.

\end{document}